\DeclareMathOperator*{\dist}{dist}
\newtheorem{theorem}{Theorem}
\newtheorem{lemma}{Lemma}
\newtheorem{corollary}{Corollary}
\theoremstyle{definition}
\def\B{\mathcal{B}}
\def\D{\mathbb{D}}
\newcommand{\bsk}{{\bigskip}}
\def\bege{\begin{equation}} \def\ende{\end{equation}}
\def\bsk{\bigskip}
\def\begr{\begin{eqnarray}} \def\endr{\end{eqnarray}}
\def\bege{\begin{equation}} \def\ende{\end{equation}}
\def\begr{\begin{eqnarray}} \def\endr{\end{eqnarray}}
\def\bnum{\begin{enumerate}} \def\enum{\end{enumerate}}
\begin{document}
\title[Embedding theorem for Dirichlet type spaces ]%
{Embedding theorem for Dirichlet type spaces}

\author{  Junming Liu,  Cheng Yuan and Songxiao Li$^*$ }

\address{Junming Liu  \\ School of Applied Mathematics, Guangdong University of Technology, Guangzhou, Guangdong 510006, P.~R.~China.}
 \email{jmliu@gdut.edu.cn }

\address{Cheng Yuan \\ School of Applied Mathematics, Guangdong University of Technology, Guangzhou, Guangdong 510006, P.~R.~China.}
 \email{yuancheng1984@163.com}

\address{Songxiao Li\\ Institute of Fundamental and Frontier Sciences, University of Electronic Science and Technology of China,
610054, Chengdu, Sichuan, P.R. China\newline
Institute of Systems Engineering, Macau University of Science and Technology, Avenida Wai Long, Taipa, Macau. } \email{jyulsx@163.com}

\begin{abstract}
Using Carleson measure theorem of weighted Bergman spaces, we provide a complete characterization of embedding theorem for Dirichlet type spaces. As an application, we study the Volterra integral operator and multipliers for Dirichlet type spaces.
\end{abstract}

\thanks{This work was supported by NNSF of China (Grant No. 11801094 and No.11720101003).}
 \thanks{* Corresponding author.}

\keywords{Dirichlet type space, Carleson measure, Volterra integral operator
operator} \subjclass[2000]{30H30, 47B38}

\maketitle

\section{Introduction}
Let $\mathbb{D}$ be the unit disc of complex plane $\mathbb{C}$, and $\partial \mathbb{D}$ be the unit circle. Denote $H(\mathbb{D})$ the space of all analytic
 functions in $\mathbb{D}$. The Bloch space $\mathcal{B}$ consists of all functions $f\in H(\mathbb{D})$ with
$$\|f\|_{\mathcal{B}}=\sup_{z\in \mathbb{D}}(1-|z|^{2})|f'(z)|<\infty.$$ The Bloch space $\mathcal{B}$ is a M\"{o}bius invariant function space under the semi-norm
$\|\cdot\|_{\mathcal{B}}$  and becomes a Banach space with the norm
$\|f\|=|f(0)|+\|f\|_{\mathcal{B}}.$
The little Bloch space $\mathcal{B}_{0}$ is the space of $f\in \mathcal{B}$ with
$$\lim_{|z|\rightarrow 1^{-}}(1-|z|^{2})|f'(z)|=0.$$
The space of bounded analytic function $H^{\infty}$ consists of functions  $f\in H(\mathbb{D})$ satisfying
$\|f\|_{\infty}=\sup_{z\in \mathbb{D}}|f(z)|<\infty.$

  Let $0<p<\infty$.  The Dirichlet type space $\mathcal{D}^{p}_{p-1}$ consist of all functions $f\in H(\mathbb{D})$ for which
 $$\|f\|_{\mathcal{D}^{p}_{p-1}}=\Big(|f(0)|^{p}+\int_{\mathbb{D}}|f'(z)|^{p}(1-|z|^{2})^{p-1}dA(z)\Big)^{1/p}<\infty,$$
 where $dA(z)=\frac{1}{\pi}dxdy$ is the normalized area measure on $\mathbb{D}$. For $p=2$, $\mathcal{D}^{p}_{p-1}$ is the classical Hardy space $H^{2}(\mathbb{D})$.

 Let $0<p<\infty$, $q>-2$ and $0<s<\infty$. The $F(p,q,s)$ space consist of those functions  $f\in H(\mathbb{D})$ with
 $$\|f\|_{F(p,q,s)}^{p}=|f(0)|^{p}+\sup_{a\in \mathbb{D}}\int_{\mathbb{D}}|f'(z)|^{p}(1-|z|^{2})^{q}(1-|\sigma_{a}(z)|^{2})^{s}dA(z)<\infty,$$
 where $\sigma_{a}(z)=\frac{a-z}{1-\overline{a}z}$.
If $p>0$ and $s\geq 2$, then $F(p,p-2,s)$ is the Bloch space. The spaces $F(p,q,s)$ were introduced by Zhao in \cite{zhao} and have attracted attention in recent years.

For an arc $I\subset \partial \mathbb{D}$, let $|I|=\frac{1}{2\pi}\int_{I}|d\zeta|$
be the normalized length of $I$ and
$$S(I)=\{r\zeta\in \mathbb{D}:\ 1-|I|\leq r<1, \zeta\in I\}$$
be the Carleson square in $\mathbb{D}$. Let $0<p<\infty$ and $\mu$ be a positive Borel measure on $\mathbb{D}$. We say that
$\mu$ is a $p$-Carleson measure if
$$\|\mu\|_{p}=\sup_{I\subset \partial \mathbb{D}}\frac{\mu(S(I))}{|I|^{p}}<\infty.$$
The $1$-Carleson measure is the classical Carleson measure.
Moreover, if
$$\lim_{|I|\rightarrow 0}\frac{\mu(S(I))}{|I|^{p}}=0,$$
we say tat $\mu$ is a vanishing $p$-Carleson measure.

  Let $0<p<\infty$, $0<q<\infty$ and $\mu$ be a positive Borel measure on  $\mathbb{D}$.  We say that $f$ belongs to the tent space  $  T_{p,q}(\mu)$ if
$$\|f\|_{T_{p,q}}^{p}=\sup_{I\subset\partial \mathbb{D}}\frac{1}{|I|^{q}}\int_{S(I)}|f(z)|^{p}d\mu(z)<\infty.$$

Embedding theorem of Hardy space was first characterized by   Carleson in \cite{car} and \cite{car1}. The following result is the well known Carleson embedding theorem  for the Hardy space.

 \vskip0.2cm \noindent {\bf Theorem A.} {\it Let $\mu$ be a positive Borel measure on $\mathbb{D}$.
 Then the inclusion  mapping $i:\ H^{2}(\mathbb{D})\rightarrow L^{2}(\mathbb{D},
 d\mu)$ is bounded if and only if $\mu$ is a Carleson measure.}
\vskip0.2cm

Duren generalized the above result to the case of the Bergman space (see \cite{DS}).
 Stegenga studied the Carleson embedding theorem  for Dirichlet type spaces in \cite{SD}.  Wu in \cite{WZ}   proved the following theorem.

  \vskip0.2cm \noindent {\bf Theorem B.} {\it Let $0<p\leq2$ and $\mu$ be a positive Borel measure on $\mathbb{D}$.
 Then the inclusion  mapping $i:\ \mathcal{D}^{p}_{p-1}\rightarrow L^{p}(\mathbb{D},
 d\mu)$ is bounded if and only if $\mu$ is a Carleson measure.}
\vskip0.2cm

In \cite{WZ}, Wu left an open problem about $p>2$ and  conjectured that Theorem B remains true for $2<p<\infty$. In \cite{GP},
 Girela and Pel\'{a}ez proved that this conjecture is false. Bearing in mind of Duren's embedding theorem of Hardy spaces,
  Girela and Pel\'{a}ez \cite{GP1} proved the following result.

   \vskip0.2cm \noindent {\bf Theorem C.} {\it Let $0<p<q<\infty$ and $\mu$ be a positive Borel measure on $\mathbb{D}$.
 Then the inclusion
 mapping $i:\ \mathcal{D}^{p}_{p-1}\rightarrow L^{q}(\mathbb{D},
 d\mu)$ is bounded if and only if $\mu$ is a $\frac{q}{p}$-Carleson measure.}
\vskip0.2cm

 Arcozzi, Rochberg and Sawyer investigated the embedding theorems for Besov spaces in \cite{ARS}.  Xiao in \cite{xiao} studied the embedding theorem  from $\mathcal{Q}_{p}$ spaces to tent spaces. Several embedding theorems for analytic function spaces are also obtain in \cite{LL}, \cite{LLZ} and \cite{WJ}.

   Based on Carleson measure theorem of Bergman spaces, we give a complete characterization of embedding theorem from the   space $\mathcal{D}^{p}_{p-1}$ into tent spaces  $  T_{p,q}(\mu)$. As an application, we study the Volterra integral operator and multipliers for the space $\mathcal{D}^{p}_{p-1}$.

For two quantities $A$ and $B$, the symbol $A\approx B$ means that $A\lesssim B\lesssim A$. We say that $A\lesssim B$ if there exists a constant $C$ such that $A\leq CB$.

\section{embedding theorem}

In this section, we   give a complete characterization for the boundedness and compactness of the inclusion
 mapping $i:\ \mathcal{D}^{p}_{p-1}\rightarrow T_{p,q}(\mu)$.

\begin{theorem}\label{th1}
 Let $0<p<\infty$, $0<q<\infty$ and $\mu$ be a positive Borel measure on $\mathbb{D}$. Then the following statements are equivalent.
  \par (1)~ The inclusion  mapping $i:\ \mathcal{D}^{p}_{p-1}\rightarrow T_{p,q}(\mu)$ is bounded;
\par (2)~  $\mu$ is a $(q+1)$-Carleson measure;
\par (3)~  The inclusion mapping $i:\ A_{q-1}^{p}\rightarrow L^{p}(\mu)$ is bounded.
\end{theorem}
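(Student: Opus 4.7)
The plan is to close the cycle $(1)\Rightarrow(2)$, $(2)\Leftrightarrow(3)$, $(3)\Rightarrow(1)$. The middle equivalence $(2)\Leftrightarrow(3)$ is the classical Carleson measure theorem for the weighted Bergman space $A^p_{q-1}$ (whose weight $(1-|z|^2)^{q-1}$ is integrable since $q>0$): the embedding $i\colon A^p_{q-1}\hookrightarrow L^p(\mu)$ is bounded if and only if $\mu(S(I))\lesssim|I|^{(q-1)+2}=|I|^{q+1}$, which is exactly $(2)$. This is the ingredient flagged in the introduction.

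For $(1)\Rightarrow(2)$, I will use the standard reproducing-kernel test functions $f_a(z)=(1-\bar a z)^{-s}$ with $s>1/p$, taking $a=a_I:=(1-|I|)e^{i\theta_I}$ where $e^{i\theta_I}$ is the center of the arc $I$. The well-known integral formula
\[
\int_{\mathbb D}\frac{(1-|z|^2)^c}{|1-\bar a z|^d}\,dA(z)\asymp(1-|a|^2)^{c+2-d}\qquad(c>-1,\ d>c+2)
\]
yields $\|f_a\|_{\mathcal{D}^{p}_{p-1}}^p\asymp(1-|a|^2)^{1-ps}\asymp|I|^{1-ps}$, while $|f_a(z)|^p\asymp|I|^{-ps}$ uniformly on $S(I)$. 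Substituting into the hypothesised tent-space bound yields $\mu(S(I))\lesssim|I|^{q+1}$.

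The main obstacle is $(3)\Rightarrow(1)$. The idea is to replace the sharp cutoff $\chi_{S(I)}$ by an analytic majorant. For $a=a_I$ set
\[
h_a(z)=\left(\frac{1-|a|^2}{(1-\bar a z)^2}\right)^{\!q/p},
\]
so that $|h_a(z)|^p\asymp|I|^{-q}$ on $S(I)$, giving $\chi_{S(I)}(z)\lesssim|I|^q|h_a(z)|^p$. Applying $(3)$ to the analytic function $fh_a$ then produces
\[
\int_{S(I)}|f|^p\,d\mu\,\lesssim\,|I|^q\int_{\mathbb D}|fh_a|^p(1-|z|^2)^{q-1}\,dA(z),
\]
and it remains to verify the uniform-in-$a$ estimate
\[
\sup_{a\in\mathbb D}\int_{\mathbb D}|f(z)|^p\frac{(1-|a|^2)^q(1-|z|^2)^{q-1}}{|1-\bar a z|^{2q}}\,dA(z)\lesssim\|f\|_{\mathcal{D}^{p}_{p-1}}^p.
\]
Via the identity $1-|\sigma_a(z)|^2=(1-|a|^2)(1-|z|^2)/|1-\bar a z|^2$ the weight factors as $(1-|\sigma_a(z)|^2)^q/(1-|z|^2)$, and the change of variables $w=\sigma_a(z)$ rewrites the bound as $\int_{\mathbb D}|f(\sigma_a(w))|^p(1-|w|^2)^{q-1}|\sigma_a'(w)|\,dA(w)\lesssim\|f\|_{\mathcal{D}^{p}_{p-1}}^p$. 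I plan to close this by invoking a Hardy-Stein/Littlewood-Paley style identity that trades an integral of $|f|^p$ for one of $|f'|^p(1-|z|^2)^{p-1}$, after which the derivative-based norm of $\mathcal{D}^{p}_{p-1}$ takes over. Making this estimate uniform in $a$ and valid for the full range $p,q>0$ is where the bulk of the technical difficulty lies.
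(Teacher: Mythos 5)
Your $(1)\Rightarrow(2)$ and $(2)\Leftrightarrow(3)$ are fine and match the paper (the paper's test function is your $f_a$ with $s=(p+1)/p$, normalized). The gap is in $(3)\Rightarrow(1)$. The bump-function reduction itself is sound and is structurally the same move the paper makes (the paper applies $(3)$ to $f-f(a)$ multiplied by the bump $(1-|a|^2)^{2/p}(1-\bar a z)^{-4/p}$, resp.\ $(1-|a|^2)^{2/p}(1-\bar az)^{-(3+q)/p}$ for $q\ge 1$, rather than to $f\cdot h_a$). But the estimate you defer,
\[
\sup_{a\in\mathbb D}\int_{\mathbb D}|f(z)|^{p}\,\frac{(1-|\sigma_a(z)|^{2})^{q}}{1-|z|^{2}}\,dA(z)\lesssim\|f\|_{\mathcal D^{p}_{p-1}}^{p},
\]
is precisely where the theorem lives: it is the Möbius-averaged form of the assertion that $|f|^{p}(1-|z|^{2})^{q-1}dA$ is a $q$-Carleson measure, i.e.\ the case $d\mu=(1-|z|^2)^{q-1}dA$ of the very embedding being proved. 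A Hardy--Stein/Littlewood--Paley identity will not close it: such identities handle \emph{radial} weights, whereas your weight carries the non-radial factor $|1-\bar az|^{-2q}$; and after substituting $w=\sigma_a(z)$ you are left with $\int|f\circ\sigma_a(w)|^{p}(1-|w|^{2})^{q-1}|\sigma_a'(w)|\,dA(w)$, which cannot be traded for $\|f\|_{\mathcal D^p_{p-1}}$ by invariance since $\mathcal D^{p}_{p-1}$ is not Möbius invariant for $p\neq 2$.

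Two further concrete problems. First, the missing ingredients are exactly the paper's: subtract $f(a)$, absorb the constant term via $|f(a)|^{p}\lesssim\|f\|^{p}_{\mathcal D^{p}_{p-1}}/(1-|a|^{2})$, use the radial embedding $\int|g|^p(1-|w|^2)^{q-1}dA\lesssim\int|g'|^p(1-|w|^2)^{p-1}dA$ for $g(0)=0$ (valid because $q>0$), and the conformal identity $\int|(f\circ\sigma_a)'|^{p}(1-|w|^{2})^{p-1}dA(w)=\int|f'(z)|^{p}(1-|z|^{2})^{p-1}\frac{1-|a|^{2}}{|1-\bar az|^{2}}dA(z)\le\frac{4}{1-|a|^{2}}\|f\|^{p}_{\mathcal D^{p}_{p-1}}$. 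Second, your particular bump exponent $2q$ is too small for this scheme to work directly: the paper's bumps are chosen so that after the change of variables exactly one spare factor of $(1-|a|^{2})$ survives to cancel the $(1-|a|^{2})^{-1}$ loss in the conformal identity, whereas your normalization leaves the factor $|\sigma_a'(w)|$, and bounding it by $4/(1-|a|^2)$ (or by $(1-|w|^2)^{-1}$) produces an uncancelled $(1-|a|^2)^{-1}$. One can still rescue your global form by first proving the local estimate over $S(I)$ with the paper's bump and then passing to the $\sup_a$ version by the standard dyadic-annuli argument, but as written the proposal does not constitute a proof of $(3)\Rightarrow(1)$.
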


\begin{proof} The $\it (2)\Leftrightarrow (3)$ is well known, see \cite{zhu}. Now we need to prove $\it (1)\Leftrightarrow (2)$.

First we suppose that the inclusion  mapping $i:\ \mathcal{D}^{p}_{p-1}\rightarrow T_{p,q}(\mu)$ is bounded.  For any $I\subset \partial \mathbb{D}$, let $\zeta$ be the midpoint of $I$ and $b=(1-|I|)\zeta$. Set
$$f_{b}(z)=\frac{1-|b|^{2}}{(1-\overline{b}z)^{\frac{p+1}{p}}}.$$
It is easy to see that $f_{b}\in \mathcal{D}^{p}_{p-1}$ with $\|f_{b}\|_{\mathcal{D}^{p}_{p-1}}\lesssim 1$.
Since $|1-\overline{b}z|\thickapprox 1-|b|^{2}\thickapprox|I|,\ \ z\in S(I),$ we get
\begin{equation}\nonumber
\begin{split}
\frac{\mu(S(I))}{|I|^{q+1}}&\thickapprox\frac{1}{|I|^{q}}\int_{S(I)}|f_{b}(z)|^{p}d\mu(z)
 \lesssim
\|f_{b}\|_{T_{p,q}}^{p}<\infty,
\end{split}
\end{equation}
which implies that $\mu$ is $q+1$-Carleson measure.

Conversely, let  $\mu$ be a $q+1$-Carleson measure. Then
 $$\sup_{I\subset \partial \mathbb{D}}\frac{\mu(S(I))}{|I|^{q+1}}<M,$$
 where $M$ is a positive constant.
 For any $I\subset \partial \mathbb{D}$, let $\zeta$ be the midpoint of $I$ and $a=(1-|I|)\zeta$. Note that
\begin{equation}\nonumber
\frac{1}{|I|^{q}}\int_{S(I)}|f(z)|^{p}d\mu(z)\lesssim \frac{1}{|I|^{q}}\int_{S(I)}|f(a)|^{p}d\mu(z)+\frac{1}{|I|^{q}}\int_{S(I)}|f(z)-f(a)|^{p}d\mu(z).
\end{equation}
Since
$|f(z)|\lesssim \frac{\|f\|_{\mathcal{D}^{p}_{p-1}}}{(1-|z|^{2})^{\frac{1}{p}}}, $  we get
$$\frac{1}{|I|^{q}}\int_{S(I)}|f(a)|^{p}d\mu(z)\lesssim \|f\|_{\mathcal{D}^{p}_{p-1}}^{p}.$$

We divide the next proof into two cases, $0<q<1$ and $1\leq q<\infty$.

For $0<q<1$, noting that the inclusion mapping $i:\ A_{q-1}^{p}\rightarrow L^{p}(\mu)$ is bounded, we get

\begin{equation}\nonumber
\begin{split}
&\quad \frac{1}{|I|^{q}}\int_{S(I)}|f(z)-f(a)|^{p}d\mu(z)\\&\thickapprox (1-|a|^{2})^{2-q}\int_{S(I)}\frac{|f(z)-f(a)|^{p}(1-|a|^{2})^{2}}{|1-\overline{a}z|^{4}}d\mu(z)
\\&\lesssim (1-|a|^{2})^{2-q}\int_{\mathbb{D}}\frac{|f(z)-f(a)|^{p}(1-|a|^{2})^{2}}{|1-\overline{a}z|^{4}}d\mu(z)
\\&\lesssim  (1-|a|^{2})^{2-q}\int_{\mathbb{D}}\frac{|f(z)-f(a)|^{p}(1-|a|^{2})^{2}}{|1-\overline{a}z|^{4}}(1-|z|^{2})^{q-1}dA(z)
\\&= (1-|a|^{2})^{2-q}\int_{\mathbb{D}}|f\circ\sigma_{a}(w)-f(a)|^{p}(1-|\sigma_{a}(w)|^{2})^{q-1}dA(w)
\\&\lesssim (1-|a|^{2})\int_{\mathbb{D}}|f\circ\sigma_{a}(w)-f(a)|^{p}(1-|w|^{2})^{q-1}dA(w)
\\&\lesssim (1-|a|^{2})\|f\circ\sigma_{a}(w)-f(a)\|_{\mathcal{D}^{p}_{p-1}}^{p}
\\&\lesssim\int_{\mathbb{D}}|f'(z)|^{p}(1-|z|^{2})^{p-1}dA(z)
\\&\lesssim \|f\|_{\mathcal{D}^{p}_{p-1}}^{p}.
\end{split}
\end{equation}

For $1\leq q<\infty$, we have
\begin{equation}\nonumber
\begin{split}
&\quad \frac{1}{|I|^{q}}\int_{S(I)}|f(z)-f(a)|^{p}d\mu(z)\\&\thickapprox (1-|a|^{2})\int_{S(I)}\frac{|f(z)-f(a)|^{p}(1-|a|^{2})^{2}}{|1-\overline{a}z|^{3+q}}d\mu(z)
\\&\lesssim (1-|a|^{2})\int_{\mathbb{D}}\frac{|f(z)-f(a)|^{p}(1-|a|^{2})^{2}}{|1-\overline{a}z|^{3+q}}d\mu(z)
\\&\lesssim  (1-|a|^{2})\int_{\mathbb{D}}\frac{|f(z)-f(a)|^{p}(1-|a|^{2})^{2}}{|1-\overline{a}z|^{3+q}}(1-|z|^{2})^{q-1}dA(z)
\\&\lesssim  (1-|a|^{2})\int_{\mathbb{D}}\frac{|f(z)-f(a)|^{p}(1-|a|^{2})^{2}}{|1-\overline{a}z|^{4}}dA(z)
\\&\lesssim (1-|a|^{2})\int_{\mathbb{D}}|f\circ\sigma_{a}(w)-f(a)|^{p}dA(w)
\\&\lesssim (1-|a|^{2})\|f\circ\sigma_{a}(w)-f(a)\|_{\mathcal{D}^{p}_{p-1}}^{p}
\\&\lesssim\int_{\mathbb{D}}|f'(z)|^{p}(1-|z|^{2})^{p-1}dA(z)
\\&\lesssim \|f\|_{\mathcal{D}^{p}_{p-1}}^{p}.
\end{split}
\end{equation}
Therefore,
$$\frac{1}{|I|^{q}}\int_{S(I)}|f(z)|^{p}d\mu(z)\lesssim  \|f\|_{\mathcal{D}^{p}_{p-1}}^{p}.$$
So the inclusion
 mapping $i:\ \mathcal{D}^{p}_{p-1}\rightarrow T_{p,q}(\mu)$ is bounded. The proof is complete.
\end{proof}

Similar to the proof of Lemma 4 of \cite{LL}, we have the following lemma.

\begin{lemma}
For $0<r<1$, let $\chi_{\{z:|z|<r\}}$ be the characterization function of the set $\{z:|z|<r\}$. If $\mu$ is a $p$-Carleson measure on $\mathbb{D}$,
then $\mu$ is a vanishing $p$-Carleson measure if and only if $\|\mu-\mu_{r}\|_{p}\rightarrow 0$ as $r\rightarrow 1^{-}$, where $\mu_{r}=\chi_{\{z:|z|<r\}}\mu$.
\end{lemma}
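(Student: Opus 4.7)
The plan is to exploit the identity $(\mu-\mu_r)(S(I)) = \mu(S(I)\setminus r\mathbb{D})$, where $r\mathbb{D}=\{z:|z|<r\}$. This yields two complementary upper bounds, $(\mu-\mu_r)(S(I))\leq\mu(S(I))$ and $(\mu-\mu_r)(S(I))\leq\mu(\mathbb{D}\setminus r\mathbb{D})$; the former is useful for short arcs, the latter for long arcs.

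For the easier direction $(\Leftarrow)$, I would observe that whenever $|I|\leq 1-r$, the Carleson square $S(I)$ lies in the annulus $\{|z|\geq 1-|I|\}\subset\mathbb{D}\setminus r\mathbb{D}$, so that $S(I)\cap r\mathbb{D}=\emptyset$ and hence $\mu(S(I))=(\mu-\mu_r)(S(I))\leq\|\mu-\mu_r\|_p|I|^p$. Given $\varepsilon>0$, pick $r$ close enough to $1$ so that $\|\mu-\mu_r\|_p<\varepsilon$; then for every arc with $|I|\leq 1-r$ one has $\mu(S(I))/|I|^p<\varepsilon$, which is precisely the vanishing $p$-Carleson condition.

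For the harder direction $(\Rightarrow)$, let $\varepsilon>0$. Using the vanishing hypothesis, first choose $\delta_0>0$ such that $\mu(S(I))/|I|^p<\varepsilon$ for every arc with $|I|\leq\delta_0$. Next, since $\mu$ is $p$-Carleson, taking $I=\partial\mathbb{D}$ (so $|I|=1$ and $S(I)$ exhausts $\mathbb{D}$) shows that $\mu$ is a finite Borel measure on $\mathbb{D}$, and therefore by continuity from above $\mu(\mathbb{D}\setminus r\mathbb{D})\to 0$ as $r\to 1^-$, because these sets decrease to $\emptyset$ in the open disc. Pick $r_0<1$ so that $\mu(\mathbb{D}\setminus r\mathbb{D})<\varepsilon\delta_0^p$ whenever $r\in(r_0,1)$. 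For any such $r$ and any arc $I$, I split into two cases: if $|I|\leq\delta_0$ then $(\mu-\mu_r)(S(I))/|I|^p\leq\mu(S(I))/|I|^p<\varepsilon$; if $|I|>\delta_0$ then $(\mu-\mu_r)(S(I))/|I|^p\leq\mu(\mathbb{D}\setminus r\mathbb{D})/\delta_0^p<\varepsilon$. Taking the supremum over $I$ yields $\|\mu-\mu_r\|_p\leq\varepsilon$.

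The two points that most need care are the containment $S(I)\subset\mathbb{D}\setminus r\mathbb{D}$ for $|I|\leq 1-r$, which is the geometric fact driving the easy direction, and the finiteness of $\mu$ together with the continuity-from-above argument that gives $\mu(\mathbb{D}\setminus r\mathbb{D})\to 0$ in the hard direction. Both are transparent consequences of the definitions, so I do not anticipate any substantive obstacle beyond bookkeeping; the argument works uniformly for all $p>0$.
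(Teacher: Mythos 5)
Your argument is correct and complete: the identity $(\mu-\mu_r)(S(I))=\mu(S(I)\setminus r\mathbb{D})$, the containment $S(I)\subset\mathbb{D}\setminus r\mathbb{D}$ for $|I|\le 1-r$, and the small-arc/large-arc split using finiteness of $\mu$ are exactly the standard ingredients. The paper itself gives no proof of this lemma (it only points to Lemma 4 of the cited reference of Liu and Lou), and your proof is essentially the argument that citation stands for, so there is nothing to contrast.
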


\begin{theorem}
 Let $0<p<\infty$, $0<q<\infty$ and $\mu$ be a positive Borel measure on $\mathbb{D}$. Then the following statements are equivalent.
  \par (1)~ The inclusion
 mapping $i:\ \mathcal{D}^{p}_{p-1}\rightarrow T_{p,q}(\mu)$ is compact;
\par (2)~  $\mu$ is a vanishing $(q+1)$-Carleson measure;
\par (3)~  The inclusion mapping $i:\ A_{q-1}^{p}\rightarrow L^{p}(\mu)$ is compact.
\end{theorem}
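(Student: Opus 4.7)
The plan is to mirror the structure of the proof of Theorem~\ref{th1}. The equivalence $(2)\Leftrightarrow(3)$ is the classical vanishing Carleson measure characterization of compact embeddings of weighted Bergman spaces into $L^p(\mu)$, and can be quoted from Zhu's monograph; the substantive work is $(1)\Leftrightarrow(2)$. Both directions rest on the standard fact that, for spaces of analytic functions, a bounded linear map $T:X\to Y$ is compact if and only if $\|Tf_n\|_Y\to 0$ whenever $\{f_n\}$ is bounded in $X$ and converges to $0$ uniformly on compact subsets of $\mathbb{D}$.

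For $(1)\Rightarrow(2)$ I would reuse the test functions from the proof of Theorem~\ref{th1}. Given arcs $I_n\subset\partial\mathbb{D}$ with $|I_n|\to 0$, set $b_n=(1-|I_n|)\zeta_n$, where $\zeta_n$ is the midpoint of $I_n$, and consider
$$f_{b_n}(z)=\frac{1-|b_n|^{2}}{(1-\overline{b_n}z)^{(p+1)/p}}.$$
These functions are uniformly bounded in $\mathcal{D}^{p}_{p-1}$ and, since $|b_n|\to 1^-$, converge to $0$ uniformly on compact subsets of $\mathbb{D}$. Compactness therefore forces $\|f_{b_n}\|_{T_{p,q}(\mu)}\to 0$, and the same lower bound as in Theorem~\ref{th1},
$$\frac{\mu(S(I_n))}{|I_n|^{q+1}}\thickapprox\frac{1}{|I_n|^{q}}\int_{S(I_n)}|f_{b_n}(z)|^{p}d\mu(z)\lesssim\|f_{b_n}\|_{T_{p,q}(\mu)}^{p},$$
delivers the vanishing $(q+1)$-Carleson condition.

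For $(2)\Rightarrow(1)$ I would take a sequence $\{f_n\}$ bounded in $\mathcal{D}^{p}_{p-1}$ and converging to $0$ uniformly on compact subsets of $\mathbb{D}$, and show $\|f_n\|_{T_{p,q}(\mu)}\to 0$. Fix $\varepsilon>0$; the preceding lemma supplies $r\in(0,1)$ with $\|\mu-\mu_r\|_{q+1}<\varepsilon$. Splitting $\mu=\mu_r+(\mu-\mu_r)$, Theorem~\ref{th1} applied to $\mu-\mu_r$ (whose $(q+1)$-Carleson norm is at most $\varepsilon$) controls the outer piece uniformly in $n$:
$$\sup_{I\subset\partial\mathbb{D}}\frac{1}{|I|^{q}}\int_{S(I)}|f_n(z)|^{p}d(\mu-\mu_r)(z)\lesssim\varepsilon\,\|f_n\|_{\mathcal{D}^{p}_{p-1}}^{p}.$$
For the inner piece, if $|I|<1-r$ then every $z\in S(I)$ satisfies $|z|\geq 1-|I|>r$, so $\mu_r(S(I))=0$; if $|I|\geq 1-r$ then $|I|^{-q}\leq(1-r)^{-q}$, and uniform convergence of $f_n$ on $\{|z|\leq r\}$ gives
$$\sup_{I\subset\partial\mathbb{D}}\frac{1}{|I|^{q}}\int_{S(I)}|f_n(z)|^{p}d\mu_r(z)\leq(1-r)^{-q}\mu(\mathbb{D})\sup_{|z|\leq r}|f_n(z)|^{p}\longrightarrow 0.$$
Letting $n\to\infty$ and then $\varepsilon\to 0$ closes the argument. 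The main technical point I expect is ensuring that the constant in Theorem~\ref{th1} depends linearly on $\|\mu\|_{q+1}$, which is what allows the $(\mu-\mu_r)$-piece to be absorbed into $\varepsilon$; this linearity is transparent from the Bergman characterization $(3)$ and can also be tracked through the explicit estimates in the proof of Theorem~\ref{th1}.
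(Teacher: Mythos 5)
Your proposal is correct and follows essentially the same route as the paper: the same test functions $f_{b_n}$ for $(1)\Rightarrow(2)$, and for $(2)\Rightarrow(1)$ the same decomposition $\mu=\mu_r+(\mu-\mu_r)$ combined with the preceding lemma and the boundedness theorem applied to $\mu-\mu_r$. Your treatment of the inner piece (distinguishing $|I|<1-r$ from $|I|\geq 1-r$) and your remark on the linear dependence of the constant on $\|\mu\|_{q+1}$ are in fact slightly more careful than the paper's own write-up.
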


\begin{proof}   $\it (2)\Leftrightarrow (3)$ follows by \cite[Theorem 7.8]{zhu}. Now we shall prove that $\it (1)\Leftrightarrow (2)$.

First, we suppose that  $i:\ \mathcal{D}^{p}_{p-1}\rightarrow T_{p,q}(\mu)$ is compact. Let $\{I_{n}\}$ be a sequence of subarcs of $\partial \mathbb{D}$ such that $|I_{n}|\rightarrow 0$ as $n\rightarrow \infty$. Let $w_{n}=(1-|I_{n}|)\zeta_{n}$, where $\zeta_{n}$ is the center of $I_{n}$. Take
$$f_{n}(z)=\frac{1-|w_{n}|^{2}}{(1-\overline{w_{n}}z)^{\frac{p+1}{p}}}.$$
Simple calculation shows that $\|f_{n}\|_{\mathcal{D}^{p}_{p-1}}\lesssim 1$ and the sequence $\{f_{n}\}$ converges to zero uniformly on compact subsets of $\mathbb{D}$.
Then
\begin{equation}\nonumber
\begin{split}
\frac{\mu(S(I_{n}))}{|I_{n}|^{q+1}}&\thickapprox \frac{1}{|I_{n}|^{q}}\int_{S(I_{n})}|f_{n}(z)|^{p}d\mu(z)
 \lesssim\|f_{n}\|_{T_{p,q}}^{p}\rightarrow 0\ \ \ \mbox{as}\ n\rightarrow \infty.
\end{split}
\end{equation}
Hence $\mu$ is a  vanishing $(q+1)$-Carleson measure by the arbitrary of the sequence $I_{n}$.

Conversely, suppose that $\mu$ is a vanishing $(q+1)$-Carleson measure. Then $\mu-\mu_{r}~(0<r<1)$ is also a bounded $(q+1)$-Carleson measure
and $\|\mu-\mu_{r}\|_{p}\rightarrow 0$ as $r\rightarrow 1$. Now, suppose that $\{f_{n}\}\subset \mathcal{D}^{p}_{p-1} $ and $\|f_{n}\|_{\mathcal{D}^{p}_{p-1}}\lesssim 1$
with $f_{n}\rightarrow 0\ (n\rightarrow \infty)$ uniformly on compact subsets of $\mathbb{D}$.
It follows from Theorem $\ref{th1}$ that
\begin{equation}\nonumber
\begin{split}
&\quad\frac{1}{|I_{n}|^{q}}\int_{S(I_{n})}|f_{n}(z)|^{p}d\mu(z)
\\&\lesssim\frac{1}{|I_{n}|^{q}}\int_{S(I_{n})}|f_{n}(z)|^{p}d\mu_{r}(z)+\frac{1}{|I_{n}|^{q}}\int_{S(I_{n})}|f_{n}(z)|^{p}d(\mu-\mu_{r})(z)
\\&\lesssim \frac{1}{|I_{n}|^{q}}\int_{S(I_{n})}|f_{n}(z)|^{p}d\mu_{r}(z)+\|\mu-\mu_{r}\|_{q+1}\|f_{n}(z)\|_{\mathcal{D}^{p}_{p-1}}^{p}
\\&\lesssim \frac{1}{|I_{n}|^{q}}\int_{S(I_{n})}|f_{n}(z)|^{p}d\mu_{r}(z)+\|\mu-\mu_{r}\|_{q+1}.
\end{split}
\end{equation}
Letting $r\rightarrow 1$ and $n\rightarrow \infty$, we have
$$\lim_{n\rightarrow \infty}\|f_{n}\|_{T_{p,q}}=0.$$
So the  inclusion
 mapping $i:\ \mathcal{D}^{p}_{p-1}\rightarrow T_{p,q}(\mu)$ is compact. The proof is complete.
\end{proof}\bsk

\section{Norm of integral operators}

 In this section, we consider the application of our embedding theorem to Volterra type operators.  Recall that, for $g\in H(\D)$, the Volterra type operator, denoted by $T_g$, is defined by (see, e.g., \cite{p,sz})
$$T_gf(z)=\int_0^z f(\xi)g'(\xi)d\xi , ~~~~f\in H(\D), \qquad z\in \mathbb{D}. $$
 Similarly, another integral operator was defined by
$$
 I_gf(z)= \int_0^z f'(\xi)g(\xi)d\xi, ~~~~f\in H(\D), \qquad z\in \mathbb{D}.
$$
  The  operators $T_g$ and $I_g$ are important operators since they are closely related with multiplication operator, i.e.,
  $$ T_g f+I_g f+f(0)g(0)=M_g f,$$
where $M_g$ is the multiplication operator which defined by $M_gf(z)=f(z)g(z). $ In \cite{p}, Pommerenke  showed that $T_g$ is
 bounded  on  $H^2$ if
and only if $g\in$ $BMOA$. Aleman and Siskakis studied
$T_g$ on $H^p$ and weighted Bergman spaces in
\cite{ac, as2}.   Recently, the operators $T_g$ and $I_g$ between some spaces of analytic
functions were investigated in \cite{ac, as2, cls, dlz, LLL,   lsjia,   ls3,    lsmn,  PZ, ql, sl,  sz} (see also the related references
therein).

\begin{theorem}\label{th2} Let $0<p<\infty$ and $g\in H(\D)$.  Then $T_{g}:\ \mathcal{D}^{p}_{p-1}\rightarrow F(p,p-1,1)$ is bounded if and only if $g\in \mathcal{B}$. Moreover
$$\|T_{g}\|_{\mathcal{D}^{p}_{p-1}\rightarrow F(p,p-1,1) }\approx \|g\|_{\mathcal{B}}.$$
\end{theorem}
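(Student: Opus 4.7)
The plan is to establish $\|T_g\|_{\mathcal D^p_{p-1}\to F(p,p-1,1)} \approx \|g\|_{\mathcal B}$ by proving the two inequalities separately. Both rely on the identity $(T_g f)'(z) = f(z)g'(z)$, which converts the $F(p,p-1,1)$ norm of $T_g f$ into a weighted integral of $|fg'|^p$; the sufficiency direction is then reduced to Theorem \ref{th1} via a dyadic decomposition of $\mathbb D$ around the test point $a$.

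For necessity ($\|g\|_{\mathcal B} \lesssim \|T_g\|$), I would test on the reproducing kernel family $f_b(z) = (1-|b|^2)(1-\overline{b}z)^{-(p+1)/p}$, which was shown in the proof of Theorem \ref{th1} to satisfy $\|f_b\|_{\mathcal D^p_{p-1}} \lesssim 1$. Choosing $a = b$ in the supremum defining the $F(p,p-1,1)$ norm and restricting the integration to a hyperbolic disk $D(b)$ around $b$, where $|f_b|^p \approx (1-|b|^2)^{-1}$, $(1-|z|^2)^{p-1} \approx (1-|b|^2)^{p-1}$, and $(1-|\sigma_b|^2) \gtrsim 1$, and then invoking the subharmonic mean-value inequality $|g'(b)|^p(1-|b|^2)^2 \lesssim \int_{D(b)}|g'|^p\, dA$, produces $(1-|b|^2)^p|g'(b)|^p \lesssim \|T_g f_b\|^p_{F(p,p-1,1)} \lesssim \|T_g\|^p$, from which $\|g\|_{\mathcal B}\lesssim \|T_g\|$ by taking the supremum in $b$.

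For sufficiency ($\|T_g\|\lesssim \|g\|_{\mathcal B}$), set $d\mu_g(z) = |g'(z)|^p(1-|z|^2)^p\, dA(z)$. The Bloch pointwise bound $|g'(z)|(1-|z|^2)\leq \|g\|_{\mathcal B}$ shows $\mu_g(S(I)) \lesssim \|g\|_{\mathcal B}^p|I|^2$, so $\mu_g$ is a $2$-Carleson measure, and Theorem \ref{th1} with $q = 1$ yields
$$M_f := \sup_{I\subset\partial\mathbb D}\frac{1}{|I|}\int_{S(I)}|f|^p\, d\mu_g \lesssim \|g\|_{\mathcal B}^p\|f\|_{\mathcal D^p_{p-1}}^p.$$
Using $(1-|\sigma_a(z)|^2) = (1-|a|^2)(1-|z|^2)/|1-\overline{a}z|^2$ rewrites the target norm as
$$\|T_g f\|^p_{F(p,p-1,1)} = \sup_{a\in\mathbb D}\int_{\mathbb D}|f(z)|^p\,\frac{1-|a|^2}{|1-\overline{a}z|^2}\, d\mu_g(z).$$
For fixed $a$ with $t := 1-|a|^2$, I would decompose $\mathbb D = \bigcup_{k\geq 0}E_k$ with $E_k = \{z: 2^k t \leq |1-\overline{a}z| < 2^{k+1}t\}$. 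On $E_k$ the weight is comparable to $4^{-k}/t$, while $E_k$ is contained in a Carleson box $S(J_k)$ centered at $a/|a|$ with $|J_k|\approx 2^k t$; thus $\int_{E_k}|f|^p\, d\mu_g \leq M_f|J_k|$, making the $k$-th shell contribute at most $(4^{-k}/t)\cdot M_f\cdot 2^k t = M_f/2^k$. Summing the geometric series produces $\|T_g f\|^p_{F(p,p-1,1)}\lesssim M_f \lesssim \|g\|_{\mathcal B}^p\|f\|_{\mathcal D^p_{p-1}}^p$ uniformly in $a$, giving $\|T_g\|\lesssim \|g\|_{\mathcal B}$.

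The principal technical obstacle is the dyadic shell step: one must verify that each $E_k$ indeed fits inside a Carleson box of size $\approx 2^k t$ (a consequence of the elementary estimate $|1-\overline{a}z|^2 \gtrsim (1-|a||z|)^2 + (\arg z - \arg a)^2$ together with $|1-\overline{a}z|\geq 1-|a|$) and that the resulting geometric series converges with constants independent of $a$. Once this uniform bookkeeping is in place, the rest of the sufficiency argument is a clean application of Theorem \ref{th1} and the Bloch pointwise bound, while necessity reduces to the short test-function computation sketched above.
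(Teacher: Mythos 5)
Your proposal is correct and follows essentially the same route as the paper: the necessity direction uses the test functions $f_b(z)=(1-|b|^2)(1-\overline{b}z)^{-(p+1)/p}$ together with a sub-mean-value estimate for $|g'|^p$ (the paper cites Lemma 4.12 of \cite{zhu} for this), and the sufficiency direction uses the $2$-Carleson measure $d\mu_g=|g'(z)|^p(1-|z|^2)^p\,dA(z)$ combined with Theorem \ref{th1} for $q=1$. The dyadic-shell decomposition you spell out is precisely the standard justification for the step the paper compresses into a one-line ``$\approx$'' between the M\"obius-weighted integral over $\mathbb{D}$ and the normalized integral over a single Carleson box, so your write-up is, if anything, more complete on that point.
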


\begin{proof} Suppose that $T_{g}:\ \mathcal{D}^{p}_{p-1}\rightarrow F(p,p-1,1)$ is bounded.
Set $$f_{a}(z)=\frac{1-|a|^{2}}{(1-\overline{a}z)^{\frac{p+1}{p}}} ,\ \ a\in \mathbb{D}.$$
Simple computation shows that $\|f_{a}\|_{\mathcal{D}^{p}_{p-1}}\lesssim 1$.
Then by Lemma 4.12 of \cite{zhu}, we get
 \begin{equation}\nonumber
\begin{split}
\infty&>\|T_{g}\|^{p}\|f_{a}\|_{\mathcal{D}^{p}_{p-1}}^{p}\\&\gtrsim\|T_{g}f_{a}\|_{F(p,p-1,1)}^{p}\\&=\int_{\mathbb{D}}|f_{a}(z)|^{p}|g'(z)|^{p}(1-|z|^{2})^{p-1}(1-|\sigma_{a}(z)|^{2})dA(z)\\&
=\int_{\mathbb{D}}|g'(z)|^{p}\frac{(1-|a|^{2})^{p}}{|1-\overline{a}z|^{p+1}}(1-|z|^{2})^{p-1}(1-|\sigma_{a}(z)|^{2})dA(z)\\&
=\int_{\mathbb{D}}|g'(\sigma_{a}(w))|^{p}\frac{(1-|a|^{2})^{p}}{|1-\overline{a}\sigma_{a}(w)|^{p+1}}(1-|\sigma_{a}(w)|^{2})^{p-1}(1-|w|^{2})|\sigma_{a}'(w)|^{2}dA(z)
\\&\gtrsim (1-|a|^{2})^{p}|g'(a)|^{p}.
\end{split}
\end{equation}
Since $a$ is arbitrary in $\mathbb{D}$, then $g\in \mathcal{B}$ and $ \|g\|_{\mathcal{B}}\lesssim \|T_{g}\|_{\mathcal{D}^{p}_{p-1}\rightarrow F(p,p-1,1) }$.

Conversely,  suppose that $g\in \mathcal{B}$. For any $I\subset \partial \mathbb{D}$, let $\zeta$ be the midpoint of $I$ and $a=(1-|I|)\zeta$. We have
$$|1-\overline{a}z|\approx 1-|a|^{2}\approx |I|, \ \ z\in S(I).$$
  Let $d\mu(z)=:|g'(z)|^{p}(1-|z|^{2})^{p}dA(z)$. Then $\mu$ is $2$-Carleson measure and $\|\mu\|\approx\|g\|_{\mathcal{B}}^{p}$ since $ \mathcal{B}=F(p,p-2,2)$. Applying the embedding theorem, we get
\begin{equation}\nonumber
\begin{split}
&\quad \int_{\mathbb{D}}|f(z)|^{p}|g'(z)|^{p}(1-|z|^{2})^{p-1}(1-|\sigma_{a}(z)|^{2})dA(z)
\\&\approx \frac{1}{|I|}\int_{S(I)}|f(z)|^{p}|g'(z)|^{p}(1-|z|^{2})^{p}dA(z)
\lesssim\|g\|_{\mathcal{B}}^{p}\|f\|_{\mathcal{D}^{p}_{p-1}}^{p}.
\end{split}
\end{equation}
It follows that $$\|T_{g}f\|_{F(p,p-1,1)}^{p}\lesssim \|g\|_{\mathcal{B}}^{p}\|f\|_{\mathcal{D}^{p}_{p-1}}^{p}.$$
So
$T_{g}:\ \mathcal{D}^{p}_{p-1}\rightarrow F(p,p-1,1)$ is bounded and $\|T_{g}\|_{\mathcal{D}^{p}_{p-1}\rightarrow F(p,p-1,1) }  \lesssim \|g\|_{\mathcal{B}}$.
 The proof is complete. \end{proof}

\begin{theorem} Let $0<p<\infty$ and $g\in H(\D)$. Then $I_{g}:\ \mathcal{D}^{p}_{p-1}\rightarrow F(p,p-1,1)$ is bounded if and only if $g\in H^{\infty}$. Moreover $$\|I_{g}\|_{\mathcal{D}^{p}_{p-1}\rightarrow F(p,p-1,1) }\approx \|g\|_{\infty}.$$
\end{theorem}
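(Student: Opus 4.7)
The plan is to mirror Theorem \ref{th2}. Since $(I_g f)'(z)=f'(z)g(z)$ rather than $f(z)g'(z)$, the condition on $g$ that makes the operator bounded changes from $\mathcal{B}$ to $H^\infty$, and moreover the sufficient direction will not need to invoke Theorem \ref{th1} at all: $|g|^p$ is now a \emph{pointwise} multiplier.

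For the sufficient direction, assuming $g\in H^\infty$, I would unfold the norm as
$$\|I_g f\|_{F(p,p-1,1)}^p=\sup_{a\in\D}\int_{\D}|f'(z)|^p|g(z)|^p(1-|z|^2)^{p-1}(1-|\sigma_a(z)|^2)\,dA(z),$$
pull $|g(z)|^p\le\|g\|_\infty^p$ outside the integral, discard the factor $1-|\sigma_a(z)|^2\le 1$, and recognize the remainder as $\|f\|_{\mathcal{D}^p_{p-1}}^p$. This already yields $\|I_g\|_{\mathcal{D}^p_{p-1}\to F(p,p-1,1)}\lesssim\|g\|_\infty$.

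For the necessary direction I would reuse the test functions from Theorem \ref{th2}: for $a\in\D$, set $f_a(z)=(1-|a|^2)/(1-\overline a z)^{(p+1)/p}$, which satisfies $\|f_a\|_{\mathcal{D}^p_{p-1}}\lesssim 1$. Choosing the supremum parameter in the $F(p,p-1,1)$ norm of $I_g f_a$ equal to $a$ itself gives
$$\|I_g\|^p\gtrsim\int_\D|f_a'(z)|^p|g(z)|^p(1-|z|^2)^{p-1}(1-|\sigma_a(z)|^2)\,dA(z).$$
On the pseudohyperbolic disk $E(a,r)$ with $r$ a fixed small constant one has $|f_a'(z)|^p\approx|a|^p(1-|a|^2)^{-(p+1)}$, $(1-|z|^2)^{p-1}\approx(1-|a|^2)^{p-1}$, and $1-|\sigma_a(z)|^2\gtrsim 1$. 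Restricting the integral to $E(a,r)$ and invoking subharmonicity of $|g|^p$, which yields $\int_{E(a,r)}|g|^p\,dA\gtrsim(1-|a|^2)^2|g(a)|^p$, delivers the pointwise estimate $|a|^p|g(a)|^p\lesssim\|I_g\|^p$.

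The main — and only mild — obstacle is that this bound is informative only for $|a|$ bounded away from $0$, because of the spurious factor $|a|^p$ coming from the differentiation of $f_a$. I would resolve this by the maximum modulus principle: once $|g|\lesssim\|I_g\|$ is known on $\{|z|=1/2\}$, the same bound holds throughout $\{|z|\le 1/2\}$, so that $\|g\|_\infty\lesssim\|I_g\|$. Combined with the previous direction, this gives the norm equivalence $\|I_g\|_{\mathcal{D}^p_{p-1}\to F(p,p-1,1)}\approx\|g\|_\infty$.
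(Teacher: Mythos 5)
Your proposal is correct and follows essentially the same route as the paper: the sufficient direction is the identical pull-out of $\|g\|_{\infty}^{p}$ followed by the trivial embedding $\mathcal{D}^{p}_{p-1}\subset F(p,p-1,1)$, and the necessary direction uses the same family of test functions together with a local sub-mean-value estimate for $|g|^{p}$ near $a$. The only divergence is cosmetic: the paper cancels the spurious factor $|a|$ by inserting $1/\overline{a}$ into the test function (at the cost of a normalization that is not uniform as $a\to 0$, a point it leaves unaddressed), whereas you keep the standard test function and remove the factor afterwards with the maximum modulus principle, which handles the small-$|a|$ regime cleanly.
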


\begin{proof} First, we assume that $g\in H^{\infty}$. We obtain that
\begin{equation}\nonumber
\begin{split}
\frac{1}{|I|}\int_{S(I)}|f'(z)|^{p}|g(z)|^{p}(1-|z|^{2})^{p}dA(z)&\lesssim\|g\|_{\infty}^{p}\|f\|_{F(p,p-1,1)}^{p}
\\&\lesssim \|g\|_{\infty}^{p}
\|f\|_{\mathcal{D}^{p}_{p-1}}^{p}.
\end{split}
\end{equation}
Then
$$ \|I_{g}f\|_{F(p,p-1,1)}\lesssim \|g\|_{\infty}
\|f\|_{\mathcal{D}^{p}_{p-1}}.$$
This implies that $I_{g}$ is bounded form $\mathcal{D}^{p}_{p-1}$ to $ F(p,p-1,1)$, and $\|I_{g}\|_{\mathcal{D}^{p}_{p-1}\rightarrow F(p,p-1,1) }\gtrsim \|g\|_{\infty}$.

Conversely,  suppose that $I_{g}:\ \mathcal{D}^{p}_{p-1}\rightarrow F(p,p-1,1)$ is bounded. We choose the test function
$$f_{a}(z)=\frac{1}{\overline{a}}\frac{1-|a|^{2}}{(1-\overline{a}z)^{\frac{p+1}{p}}}, ~\mbox{with}~  a\in \mathbb{D}~\mbox{and}~ a\neq 0.$$
Then
 $f_{a}\in \mathcal{D}^{p}_{p-1}$ with $\|f_{a}\|\lesssim 1$.
So $$\|I_{g}f_{a}\|_{F(p,p-1,1)}\lesssim \|I_{g}\|<\infty.$$
Note that
$$|g(a)|^{p}\lesssim \int_{\mathbb{D}}\frac{1}{|1-\overline{a}z|}|g\circ\sigma_{a}(z)|^{p}(1-|z|^{2})^{p}dA(z)\lesssim \|I_{g}f_{a}\|_{F(p,p-1,1)}^{p}.$$
We have $g\in H^{\infty}$ and $\|g\|_{\infty}\lesssim \|I_{g}\|_{\mathcal{D}^{p}_{p-1}\rightarrow F(p,p-1,1) } $. The proof is complete.
\end{proof}

\begin{theorem} Let $0<p<\infty$ and $g\in H(\D)$. Then  $M_{g}:\ \mathcal{D}^{p}_{p-1}\rightarrow F(p,p-1,1)$ is bounded if and only if $g\in H^{\infty}$. Moreover, $$\|M_{g}\|_{\mathcal{D}^{p}_{p-1}\rightarrow F(p,p-1,1) } \approx \|g\|_{\infty}.$$
\end{theorem}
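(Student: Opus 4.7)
The plan is to handle the two directions separately: the sufficient direction follows mechanically from the two previous theorems via the operator identity $M_gf=T_gf+I_gf+f(0)g(0)$, while the necessary direction is extracted from a single test function together with a pointwise growth estimate in $F(p,p-1,1)$.

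For the sufficient direction, I would note that $g\in H^{\infty}$ implies $g\in\B$ with $\|g\|_{\B}\le 2\|g\|_{\infty}$, so Theorem \ref{th2} yields $\|T_gf\|_{F(p,p-1,1)}\lesssim\|g\|_{\infty}\|f\|_{\D^p_{p-1}}$, and the preceding theorem for $I_g$ gives $\|I_gf\|_{F(p,p-1,1)}\lesssim\|g\|_{\infty}\|f\|_{\D^p_{p-1}}$. Combining these with the identity $M_gf=T_gf+I_gf+f(0)g(0)$, together with the trivial bound $|f(0)g(0)|\le\|f\|_{\D^p_{p-1}}\|g\|_{\infty}$ and the (quasi-)triangle inequality in $F(p,p-1,1)$, I obtain $\|M_gf\|_{F(p,p-1,1)}\lesssim\|g\|_{\infty}\|f\|_{\D^p_{p-1}}$, so $\|M_g\|\lesssim\|g\|_{\infty}$.

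For the necessary direction I would reuse the test function from the preceding theorem, namely $f_a(z)=\frac{1}{\bar a}\frac{1-|a|^2}{(1-\bar a z)^{(p+1)/p}}$ for $a\in\D$, $|a|\ge\tfrac12$, for which $\|f_a\|_{\D^p_{p-1}}\lesssim 1$ and $|f_a(a)|\asymp(1-|a|^2)^{-1/p}$. Boundedness of $M_g$ then gives $\|gf_a\|_{F(p,p-1,1)}\lesssim\|M_g\|$. The key auxiliary step is the pointwise growth estimate
\[
|h(z)|\lesssim\frac{\|h\|_{F(p,p-1,1)}}{(1-|z|^2)^{1/p}},\qquad h\in F(p,p-1,1),\ z\in\D,
\]
which I would establish in two stages: first, using the subharmonicity of $|h'|^p$ on the Euclidean disk $D(z,(1-|z|)/2)$ and the fact that $1-|\sigma_z(w)|^2\asymp 1$ on that disk, derive $|h'(z)|\lesssim \|h\|_{F(p,p-1,1)}(1-|z|^2)^{-(p+1)/p}$; second, integrate this bound along the radius from $0$ to $z$ and add $|h(0)|\le\|h\|_{F(p,p-1,1)}$. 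Applying the estimate to $h=gf_a$ at $z=a$ cancels the blow-up of $|f_a(a)|$ exactly, giving $|g(a)|\lesssim\|M_g\|$. For $|a|<\tfrac12$, I would apply the same growth estimate directly to $g=M_g(1)\in F(p,p-1,1)$, which is trivially bounded on compact subsets of $\D$ with constant $\lesssim\|M_g\|$.

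The main obstacle is getting the exponent in the pointwise growth estimate exactly right: the factor $(1-|z|^2)^{-1/p}$ must match the reproducing-kernel-type blow-up $|f_a(a)|\asymp(1-|a|^2)^{-1/p}$, otherwise one gets either a trivial bound or no bound at all. The sharpness is supplied by the fact that the defining weight $(1-|z|^2)^{p-1}(1-|\sigma_a|^2)$ of $F(p,p-1,1)$ reduces to $(1-|z|^2)^{p-1}$ on a pseudohyperbolic neighborhood of $a$, leaving precisely one extra power of $(1-|z|^2)$ in the denominator after one subharmonic averaging, which upon integration produces the sharp exponent $1/p$.
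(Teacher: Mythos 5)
Your proposal is correct and follows essentially the same route as the paper: the sufficiency is obtained by combining the boundedness of $T_g$ and $I_g$ through the identity $M_gf=T_gf+I_gf+f(0)g(0)$, and the necessity uses the same family of test functions together with the pointwise growth estimate $|h(z)|\lesssim \|h\|_{F(p,p-1,1)}(1-|z|^{2})^{-1/p}$ evaluated at $z=a$. The only difference is that you supply a self-contained subharmonicity proof of that growth estimate (and handle small $|a|$ separately), whereas the paper cites it from an earlier reference.
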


\begin{proof}
Note that $H^{\infty}\subset \mathcal{B}$. If $g\in H^{\infty}$. Then both $I_{g}$ and $T_{g}$ are bounded from $\mathcal{D}^{p}_{p-1}$ to $ F(p,p-1,1)$. Then
$M_{g}:\ \mathcal{D}^{p}_{p-1}\rightarrow F(p,p-1,1)$ is bounded and $\|M_{g}\|_{\mathcal{D}^{p}_{p-1}\rightarrow F(p,p-1,1) }\lesssim \|g\|_{\infty}$.

On the other hand, suppose that  $M_{g}:\ \mathcal{D}^{p}_{p-1}\rightarrow F(p,p-1,1)$ is bounded.
Set $f_{w}(z)=\frac{1-|w|^{2}}{(1-\overline{w}z)^{1+\frac{1}{p}}}.$ Then $f_{w}\in \mathcal{D}^{p}_{p-1}$ with
$\|f_{w}\|_{\mathcal{D}^{p}_{p-1}}\lesssim 1$.
Using Lemma 4.12 of \cite{zhu}, we obtain that
\begin{equation}\nonumber
\begin{split}
&\quad\int_{\mathbb{D}}|f'(z)|^{p}(1-|z|^{2})^{p-1}(1-|\sigma_{a}(z)|^{2})dA(z)\\&
=\int_{\mathbb{D}}|f'(\sigma_{a}(w))|^{p}(1-|\sigma_{a}(w)|^{2})^{p-1}(1-|w|^{2})|\sigma_{a}'(w)|^{2}dA(w)
\\&\gtrsim |f'(a)|(1-|a|^{2})^{1+\frac{1}{p}}.
\end{split}
\end{equation}
Similarly with the proof of Lemma 2.5 of \cite{LLL}, we have
$$|f(z)|\lesssim \frac{\|f\|_{F(p,p-1,1)}}{(1-|z|^{2})^{\frac{1}{p}}}.$$
  Then
\begin{equation}\nonumber
\begin{split}\Big|\frac{1-|w|^{2}}{(1-\overline{w}z)^{1+\frac{1}{p}}}g(z)\Big|&\lesssim \frac{1}{(1-|z|^{2})^{\frac{1}{p}}}\|M_{g}f_{w}\|_{F(p,p-1,1)}
\\&\lesssim \frac{1}{(1-|z|^{2})^{\frac{1}{p}}}\|M_{g}\|_{\mathcal{D}^{p}_{p-1}\rightarrow F(p,p-1,1) }.
\end{split}
\end{equation}
Let $z=w$. Then $\|g\|_{\infty}\lesssim \|M_{g}\|_{\mathcal{D}^{p}_{p-1}\rightarrow F(p,p-1,1) }$ since $z$ was arbitrary. The proof is complete.
\end{proof}\bsk

\section{Essential norm of integral operator}

In this section, we investigate the essential norm of integral operators from $\mathcal{D}^{p}_{p-1}$ to $F(p,p-1,1)$. Recall that the essential norm of $T: X\rightarrow Y$ is defined as follows.
$$
\|T\|_{e, X\rightarrow Y}=\inf\{\|T-K\|_{X\rightarrow Y}: K~\mbox{is compact}~~\}.
$$
Here $X$ and $Y$ are Banach spaces and $T: X\rightarrow Y$ is a bounded linear operator.  It is well known that $\|T\|_{e, X\rightarrow Y}= 0$ if and only if $T: X\rightarrow Y$ is compact.

Given two Banach spaces $X$ and $Y$ with $Y\subset X$. For $f\in X$, the distance of function $f$ to the space $Y$ is defined by
$${\dist}_X(f, Y)=\inf_{g\in Y}\|f-g\|_{X}.$$

The following lemma, which characterized the distance from the Bloch function to the little Bloch space£¬ was proved by  Attele \cite{Ak} and Tjani \cite{TM}.
Here and afterward, we denote $g_{r}(z)=g(rz)$ with $0<r<1$.

\begin{lemma}\label{lem2}
If $g\in \mathcal{B}$, then
$$\limsup_{|z|\rightarrow 1^{-}}(1-|z|^{2})|g'(z)|\leq {\dist}_{\mathcal{B}}(g,\mathcal{B}_{0})\leq \limsup_{r\rightarrow 1^{-}}\|g-g_{r}\|_{\mathcal{B}}\leq 2\limsup_{|z|\rightarrow 1^{-}}(1-|z|^{2})|g'(z)|~. $$
\end{lemma}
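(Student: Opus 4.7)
The inequality chain consists of three separate bounds, each of a different flavor, so the plan is to treat them in turn and then combine.

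The first inequality $\limsup_{|z|\to 1^-}(1-|z|^2)|g'(z)|\le\dist_{\mathcal{B}}(g,\mathcal{B}_0)$ is immediate from the definition of $\mathcal{B}_0$. For any $h\in\mathcal{B}_0$, since $\limsup_{|z|\to 1^-}(1-|z|^2)|h'(z)|=0$, the triangle inequality gives
$$\limsup_{|z|\to 1^-}(1-|z|^2)|g'(z)|=\limsup_{|z|\to 1^-}(1-|z|^2)|g'(z)-h'(z)|\le\|g-h\|_{\mathcal{B}}.$$
Taking the infimum over $h\in\mathcal{B}_0$ yields the bound.

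The middle inequality $\dist_{\mathcal{B}}(g,\mathcal{B}_0)\le\limsup_{r\to 1^-}\|g-g_r\|_{\mathcal{B}}$ also requires only one observation, namely $g_r\in\mathcal{B}_0$ for every $r\in(0,1)$: since $g_r'(z)=rg'(rz)$ is bounded on $\overline{\mathbb{D}}$, the product $(1-|z|^2)|g_r'(z)|$ tends to $0$ as $|z|\to 1^-$. Hence $\dist_{\mathcal{B}}(g,\mathcal{B}_0)\le\|g-g_r\|_{\mathcal{B}}$ for each $r$, and passing to the $\limsup$ gives the claim.

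The third inequality is the main substance of the proof, and I expect it to be the only nontrivial step. Write $L=\limsup_{|z|\to 1^-}(1-|z|^2)|g'(z)|$ and fix $\varepsilon>0$. Choose $\rho\in(0,1)$ so that $(1-|w|^2)|g'(w)|<L+\varepsilon$ whenever $|w|>\rho$, and then fix some $R\in(\rho,1)$. To estimate $\|g-g_r\|_{\mathcal{B}}=\sup_{z\in\mathbb{D}}(1-|z|^2)|g'(z)-rg'(rz)|$, I would split $\mathbb{D}$ into $\{|z|\le R\}$ and $\{|z|>R\}$. On the compact disk $\{|z|\le R\}$, the derivatives $rg'(rz)$ converge uniformly to $g'(z)$ as $r\to 1^-$, so the supremum of $(1-|z|^2)|g'(z)-rg'(rz)|$ over this set tends to $0$. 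On $\{|z|>R\}$, once $r>\rho/R$ we have both $|z|>\rho$ and $r|z|>\rho$, so the choice of $\rho$ gives $(1-|z|^2)|g'(z)|<L+\varepsilon$ and, using the elementary estimate $(1-|z|^2)\le(1-r^2|z|^2)$ together with $r\le 1$, also $(1-|z|^2)r|g'(rz)|\le(1-r^2|z|^2)|g'(rz)|<L+\varepsilon$. Adding these yields $(1-|z|^2)|g'(z)-rg'(rz)|<2(L+\varepsilon)$ uniformly on $\{|z|>R\}$. Combining the two regions gives $\limsup_{r\to 1^-}\|g-g_r\|_{\mathcal{B}}\le 2(L+\varepsilon)$, and letting $\varepsilon\to 0$ completes the proof.

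The only delicate point is making the two radii $\rho$ and $R$ work together so that the region $\{|z|>R\}$ lies entirely inside the good set for both $g'$ evaluated at $z$ and at $rz$; once this bookkeeping is settled the estimate reduces to the inequality $(1-|z|^2)\le(1-r^2|z|^2)$, which is the only nonobvious algebraic ingredient.
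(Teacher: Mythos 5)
Your proof is correct. Note that the paper does not actually prove this lemma --- it is quoted as a known result of Attele and Tjani, with the proof deferred to \cite{Ak} and \cite{TM} --- so there is no in-paper argument to compare against; what you have written is essentially the standard proof from Tjani's paper. All three steps check out: the first inequality follows from $\limsup_{|z|\to 1^-}(1-|z|^2)|h'(z)|=0$ for $h\in\mathcal{B}_0$ exactly as you say; the middle one needs only $g_r\in\mathcal{B}_0$, which your boundedness-of-$g_r'$ observation supplies; and in the main estimate the bookkeeping with $\rho$, $R$, and $r>\rho/R$ correctly ensures that both $|z|$ and $|rz|$ exceed $\rho$ on the outer region, so that $(1-|z|^2)r|g'(rz)|\leq(1-|rz|^2)|g'(rz)|<L+\varepsilon$ and the factor $2$ emerges from the triangle inequality. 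The uniform convergence of $rg'(rz)$ to $g'(z)$ on the compact disk $\{|z|\leq R\}$ is also justified. No gaps.
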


For the proof of the next theorem, we need the following lemma.

\begin{lemma} Let $0<p<\infty$, $g\in \mathcal{B}$ and $0<r<1$. Then $T_{g_{r}} $ is a compact operator from $\mathcal{D}^{p}_{p-1}$ to $F(p,p-1,1)$.
\end{lemma}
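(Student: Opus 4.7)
The plan is to verify compactness of $T_{g_r}: \mathcal{D}^p_{p-1} \to F(p,p-1,1)$ via the standard sequential criterion: whenever $\{f_n\}$ is a bounded sequence in $\mathcal{D}^p_{p-1}$ converging to zero uniformly on compact subsets of $\D$, I must show $\|T_{g_r} f_n\|_{F(p,p-1,1)} \to 0$.

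The first step is to observe that $g_r \in \mathcal{B}_0$. Since $g_r'(z) = r g'(rz)$ and $|g'(rz)| \leq \|g\|_{\mathcal{B}}/(1-r^2)$, one has $(1-|z|^2)|g_r'(z)| \to 0$ as $|z| \to 1^-$. Consequently, the measure $d\mu_{g_r}(z) := |g_r'(z)|^p (1-|z|^2)^p dA(z)$ is a vanishing $2$-Carleson measure: given $\varepsilon > 0$, pick $\delta$ so that $(1-|z|^2)|g_r'(z)| < \varepsilon$ for $|z| > 1-\delta$; then for $|I| < \delta$,
$$\frac{\mu_{g_r}(S(I))}{|I|^2} \leq \frac{1}{|I|^2}\int_{S(I)} \varepsilon^p \, dA(z) \lesssim \varepsilon^p.$$

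Next, by the compactness analogue of Theorem \ref{th1} (the second theorem of Section 2, specialized to $q=1$), the inclusion $i: \mathcal{D}^p_{p-1} \to T_{p,1}(\mu_{g_r})$ is compact, so $\|f_n\|_{T_{p,1}(\mu_{g_r})} \to 0$. It then remains to translate this tent space convergence back into $F(p,p-1,1)$ convergence. The computation in the sufficiency part of the proof of Theorem \ref{th2} shows
$$\|T_{g_r} f\|_{F(p,p-1,1)}^p = \sup_{a \in \D}\int_\D |f(z)|^p |g_r'(z)|^p (1-|z|^2)^{p-1}(1-|\sigma_a(z)|^2)\,dA(z) \approx \|f\|_{T_{p,1}(\mu_{g_r})}^p,$$
based on the pointwise comparison $1-|\sigma_a(z)|^2 \approx (1-|z|^2)/|I|$ for $z \in S(I)$ and $a = (1-|I|)\zeta$. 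Applying this with $f = f_n$ and letting $n \to \infty$ will finish the argument.

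The main technical point is the two-sided equivalence in the last display. The upper bound is essentially the sufficiency computation in Theorem \ref{th2}; for the matching lower bound, one restricts the outer integral over $\D$ to $S(I)$ for the extremal choice of $a$ corresponding to $I$, on which $(1-|a|^2)/|1-\bar{a}z|^2 \approx 1/|I|$, giving a routine standard estimate.
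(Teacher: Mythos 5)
Your proof is correct, but it takes a genuinely different route from the paper's. The paper argues directly: it uses the crude uniform bound $|g_r'(z)|\leq \|g\|_{\mathcal{B}}/(1-r^2)$ to pull $|g_r'|^p$ out of the integral, drops the factor $1-|\sigma_a(z)|^2\leq 1$, and is left with showing $\int_{\mathbb{D}}|f_n(z)|^p(1-|z|^2)^{p-1}\,dA(z)\to 0$, which it gets from local uniform convergence plus dominated convergence. You instead observe that $g_r\in\mathcal{B}_0$, hence $d\mu_{g_r}=|g_r'|^p(1-|z|^2)^p\,dA$ is a \emph{vanishing} $2$-Carleson measure, invoke the compactness half of the embedding theorem (Theorem 2 with $q=1$) to get $\|f_n\|_{T_{p,1}(\mu_{g_r})}\to 0$, and then transfer back to $F(p,p-1,1)$. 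Both work, and each has a cost: the paper's dominated-convergence step needs a dominating function, and the natural one coming from $|f_n(z)|^p\lesssim(1-|z|^2)^{-1}$ is $(1-|z|^2)^{p-2}$, integrable only for $p>1$, so for small $p$ that step needs an extra splitting argument that the paper glosses over --- your route avoids this entirely by leaning on the already-proved compact embedding. On the other hand, your transfer step is slightly overstated as written: the pointwise comparison $1-|\sigma_a(z)|^2\approx(1-|z|^2)/|I|$ on $S(I)$ only yields the lower bound (which you do not actually need); the upper bound $\|T_{g_r}f\|_{F(p,p-1,1)}^p\lesssim\|f\|_{T_{p,1}(\mu_{g_r})}^p$, which is the direction your argument requires, is the standard equivalence $\sup_a\int_{\mathbb{D}}\frac{1-|a|^2}{|1-\overline{a}z|^2}\,d\lambda(z)\approx\sup_I\frac{\lambda(S(I))}{|I|}$ for $d\lambda=|f|^p\,d\mu_{g_r}$, and it needs the dyadic decomposition of $\mathbb{D}$ into annular Carleson boxes $S(2^kI)\setminus S(2^{k-1}I)$, not just the single-box comparison. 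Since the paper itself uses the same unjustified ``$\approx$'' in the proof of Theorem 3, you are on equal footing there, but you should cite or carry out that summation explicitly.
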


\begin{proof}
Let $\{f_{n}\}$ be a sequence such that $\|f_{n}\|_{ \mathcal{D}^{p}_{p-1}}\leq 1$ and $f_{n}\rightarrow 0$ uniformly on compact subsets of
$\mathbb{D}$ as $n\rightarrow \infty$. We only need to show that
$$\lim_{n\rightarrow \infty}\|T_{g_{r}}f_{n}\|_{F(p,p-1,1)}=0.$$
From the proof of Lemma 2, we have  $\|g_{r}\|_{\mathcal{B}}\leq \|g\|_{\mathcal{B}}$.  Note that
$$|g'_{r}(z)|\leq \frac{(1-r^{2}|z|^{2})|g'(rz)|}{1-r^{2}}\leq \frac{\|g\|_{\mathcal{B}}}{1-r^{2}},~ z\in \mathbb{D},$$
we have
\begin{equation}\nonumber
\begin{split}
\|T_{g_{r}}f_{n}\|_{F(p,p-1,1)}^{p}& =\sup_{a\in \mathbb{D}}\int_{\mathbb{D}}|f_{n}(z)|^{p}|g_{r}'(z)|^{p}(1-|z|^{2})^{p-1}(1-|\sigma_{a}(z)|^{2})dA(z)
\\&\lesssim  \frac{\|g\|_{\mathcal{B}}^{p}}{(1-r^{2})^{p}}\sup_{a\in \mathbb{D}}\int_{\mathbb{D}}|f_{n}(z)|^{p}(1-|z|^{2})^{p-1}dA(z).
\end{split}
\end{equation}
Note that
$\|f\|_{\mathcal{D}^{p}_{p-1}}\leq 1$ and then $|f_{n}(z)|^{p}(1-|z|^{2})^{p-1}\lesssim \|f\|_{\mathcal{D}^{p}_{p-1}}\leq  1$.
The desired result follows from the dominated convergence theorem.
\end{proof}

Next, we give some estimates of the essential norm of $T_{g}$ from $\mathcal{D}^{p}_{p-1}$ to $ F(p,p-1,1)$.

\begin{theorem} Let $0<p<\infty$ and $g\in H(\D)$ such that $T_{g}:~\mathcal{D}^{p}_{p-1}\rightarrow F(p,p-1,1)$  is bounded.   Then
$$\|T_{g}\|_{e, \mathcal{D}^{p}_{p-1}\rightarrow F(p,p-1,1) }\approx \limsup_{|z|\rightarrow 1^{-}}(1-|z|^{2})|g'(z)|\approx {\dist}_{\mathcal{B}}(g,\mathcal{B}_{0}).$$
\end{theorem}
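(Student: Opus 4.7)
The plan is to prove $\|T_g\|_{e,\mathcal{D}^p_{p-1}\to F(p,p-1,1)} \approx \limsup_{|z|\to 1^-}(1-|z|^2)|g'(z)|$; once this is established, the second equivalence with $\operatorname{dist}_{\mathcal{B}}(g,\mathcal{B}_0)$ is immediate from Lemma~\ref{lem2}. So I need only establish matching upper and lower estimates on the essential norm of $T_g$.

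For the upper bound, I will use the preceding lemma, which furnishes the compact approximants $T_{g_r}$ for $0<r<1$. Combining the definition of essential norm, the linearity $T_g-T_{g_r}=T_{g-g_r}$, and the operator-norm estimate $\|T_h\|_{\mathcal{D}^p_{p-1}\to F(p,p-1,1)}\lesssim \|h\|_{\mathcal{B}}$ from Theorem~\ref{th2} (applied to the Bloch function $h=g-g_r$), I obtain
$$\|T_g\|_{e,\mathcal{D}^p_{p-1}\to F(p,p-1,1)} \leq \|T_{g-g_r}\|_{\mathcal{D}^p_{p-1}\to F(p,p-1,1)} \lesssim \|g-g_r\|_{\mathcal{B}}.$$
Passing to $\limsup_{r\to 1^-}$ and invoking the third inequality of Lemma~\ref{lem2} then bounds $\|T_g\|_e$ above by a constant multiple of $\limsup_{|z|\to 1^-}(1-|z|^2)|g'(z)|$.

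For the lower bound, I will reuse the test functions $f_a(z)=(1-|a|^2)/(1-\overline{a}z)^{(p+1)/p}$ from the proof of Theorem~\ref{th2}. Given any sequence $\{a_n\}\subset\mathbb{D}$ with $|a_n|\to 1^-$, I have $\|f_{a_n}\|_{\mathcal{D}^p_{p-1}}\lesssim 1$ and $f_{a_n}\to 0$ uniformly on compact subsets of $\mathbb{D}$. For any compact operator $K:\mathcal{D}^p_{p-1}\to F(p,p-1,1)$, a standard subsequence-plus-point-evaluation argument forces $\|Kf_{a_n}\|_{F(p,p-1,1)}\to 0$. Together with the pointwise lower estimate $\|T_g f_a\|_{F(p,p-1,1)}^p \gtrsim (1-|a|^2)^p|g'(a)|^p$ already extracted inside the proof of Theorem~\ref{th2}, this gives
$$\|T_g-K\|_{\mathcal{D}^p_{p-1}\to F(p,p-1,1)} \gtrsim \limsup_{n\to\infty}\|T_g f_{a_n}\|_{F(p,p-1,1)} \gtrsim \limsup_{n\to\infty}(1-|a_n|^2)|g'(a_n)|.$$
Taking the infimum over compact $K$ and the supremum over such sequences $\{a_n\}$ yields $\|T_g\|_e \gtrsim \limsup_{|z|\to 1^-}(1-|z|^2)|g'(z)|$. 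The step requiring the most care is verifying that a compact $K$ sends $\{f_{a_n}\}$ to a norm-null sequence, since $F(p,p-1,1)$ is not a Hardy or Bergman space; this hinges on continuity of point evaluations on $F(p,p-1,1)$ combined with the usual subsequence argument, after which everything else is a clean packaging of Theorem~\ref{th2} and Lemma~\ref{lem2}.
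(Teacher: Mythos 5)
Your proposal is correct and follows essentially the same route as the paper: the lower bound via the normalized test functions $f_{a_n}(z)=(1-|a_n|^2)/(1-\overline{a_n}z)^{(p+1)/p}$ (which tend to zero uniformly on compact sets, so any compact $K$ annihilates them in the limit) together with the pointwise estimate from Theorem~\ref{th2}, and the upper bound via the compact approximants $T_{g_r}$, the identity $T_g-T_{g_r}=T_{g-g_r}$, the norm comparison $\|T_{g-g_r}\|\approx\|g-g_r\|_{\mathcal{B}}$, and Lemma~\ref{lem2}. No substantive differences to report.
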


\begin{proof} Let $a_{n}\in \mathbb{D}$ such that $|a_{n}|\rightarrow 1$ as $n\rightarrow \infty$. Set
$$f_{n}(z)=\frac{1-|a_{n}|^{2}}{(1-\overline{a_{n}}z)^{\frac{p+1}{p}}}, ~~~~z\in \D.$$
It is easy to see that $\|f_{n}\|_{\mathcal{D}^{p}_{p-1}}\lesssim 1$ and the sequence $\{f_{n}\}_{n=1}^{\infty}$ converges to zero uniformly on the compact subsets of $\mathbb{D}$. For any compact operator $K:~\mathcal{D}^{p}_{p-1}\rightarrow F(p,p-1,1)$, we have
$$\lim_{n\rightarrow \infty}\|Kf_{n}\|_{ F(p,p-1,1)}=0.$$
Therefore,
\begin{equation}\nonumber
\begin{split}
&\quad\|T_{g}-K\|_{\mathcal{D}^{p}_{p-1}\rightarrow F(p,p-1,1) }\\&\gtrsim \limsup_{n\rightarrow \infty}\|(T_{g}-K)(f_{n})\|_{F(p,p-1,1)}\\&
\geq\limsup_{n\rightarrow \infty}(\|T_{g}f_{n}\|_{F(p,p-1,1)}-\|Kf_{n}\|_{ F(p,p-1,1)})\\&
=\limsup_{n\rightarrow \infty}\|T_{g}f_{n}\|_{F(p,p-1,1)}\\&=
\limsup_{n\rightarrow \infty}\Big(\int_{\mathbb{D}}|f_{n}(z)|^{p}|g'(z)|^{p}(1-|z|^{2})^{p-1}(1-|\sigma_{a_{n}}(z)|^{2})dA(z)\Big)^{1/p}
\\&\gtrsim \limsup_{n\rightarrow \infty}(1-|a_{n}|^{2})|g'(a_{n})|.
\end{split}
\end{equation}
Since the sequence ${a_{n}}$ is arbitrary in $\mathbb{D}$, we get that
$$\|T_{g}\|_{e, \mathcal{D}^{p}_{p-1}\rightarrow F(p,p-1,1) }\gtrsim \limsup_{|z|\rightarrow 1^{-}}(1-|z|^{2})|g'(z)|.$$

On the other hand, $T_{g_{r}}:~\mathcal{D}^{p}_{p-1}\rightarrow F(p,p-1,1)$ is a compact operator. Combining this with Theorem $\ref{th2}$, we obtain that
$$\|T_{g}\|_{e, \mathcal{D}^{p}_{p-1}\rightarrow F(p,p-1,1) }\leq \|T_{g}-T_{g_{r}}\|_{ \mathcal{D}^{p}_{p-1}\rightarrow F(p,p-1,1) }=\|T_{g-g_{r}}\|_{ \mathcal{D}^{p}_{p-1}\rightarrow F(p,p-1,1) }$$
$$\approx \|g-g_{r}\|_{\mathcal{B}}.$$
Then
$$\|T_{g}\|_{e, \mathcal{D}^{p}_{p-1}\rightarrow F(p,p-1,1) } \lesssim \limsup_{r\rightarrow 1^{-}}\|g-g_{r}\|_{\mathcal{B}}\approx {\dist}_\mathcal{B}(g,\mathcal{B}_{0}).$$
This completes the proof by Lemma $\ref{lem2}$ .
\end{proof}

From the last Theorem, we can easily get the following corollary.

\begin{corollary} Let $0<p<\infty$ and  $g\in \mathcal{B}$.  Then $T_{g}:~\mathcal{D}^{p}_{p-1}\rightarrow F(p,p-1,1)$ is compact if and only if $g\in \B_0$.
\end{corollary}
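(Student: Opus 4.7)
The plan is to read this corollary as a direct consequence of the essential norm estimate proved in the preceding theorem, together with the definition of the little Bloch space and the boundedness criterion from Theorem \ref{th2}.

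First I would note that since $g\in\mathcal{B}$, Theorem \ref{th2} ensures that $T_g:\mathcal{D}^p_{p-1}\to F(p,p-1,1)$ is bounded, so the essential norm $\|T_g\|_{e,\mathcal{D}^p_{p-1}\to F(p,p-1,1)}$ is well defined. Next I would invoke the standard fact that a bounded operator between Banach (quasi-Banach) spaces is compact if and only if its essential norm is zero; hence $T_g$ is compact iff $\|T_g\|_{e,\mathcal{D}^p_{p-1}\to F(p,p-1,1)}=0$.

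Now I would apply the chain of equivalences from the previous theorem, which says
\[
\|T_g\|_{e,\mathcal{D}^p_{p-1}\to F(p,p-1,1)}\approx \limsup_{|z|\to 1^-}(1-|z|^2)|g'(z)|\approx \mathrm{dist}_{\mathcal{B}}(g,\mathcal{B}_0).
\]
Either of the latter two quantities vanishes precisely when $g\in\mathcal{B}_0$: for the middle quantity this is literally the definition of the little Bloch space; for the rightmost quantity it follows from the fact that $\mathcal{B}_0$ is a closed subspace of $\mathcal{B}$, so the distance from $g$ to $\mathcal{B}_0$ is zero iff $g\in\mathcal{B}_0$. Combining these observations yields the desired equivalence.

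There is no serious obstacle here; the entire content of the corollary is extracted from the preceding theorem by setting the essential norm equal to zero. The only tiny care needed is to remark explicitly that the hypothesis $g\in\mathcal{B}$ is exactly what makes $T_g$ bounded (so that talking about its essential norm and compactness is meaningful), and that $\limsup_{|z|\to 1^-}(1-|z|^2)|g'(z)|=0$ is the defining property of $\mathcal{B}_0$. No further computation, test function, or auxiliary lemma is required.
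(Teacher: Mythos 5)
Your proposal is correct and follows exactly the route the paper intends: the corollary is derived from the essential norm theorem by noting that compactness is equivalent to vanishing essential norm, which by the displayed equivalences holds precisely when $\limsup_{|z|\to 1^-}(1-|z|^2)|g'(z)|=0$, i.e.\ when $g\in\mathcal{B}_0$. Your additional remark that $g\in\mathcal{B}$ guarantees boundedness via Theorem \ref{th2} is a sensible clarification of the hypothesis needed to apply that theorem.
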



\begin{thebibliography}{9}

\bibitem{ac} A.~Aleman and J.~Cima, An integral operator on $H^p$ and Hardy's inequality, {\it J. Anal. Math.} {\bf 85} (2001), 157--176.

\bibitem{as2} A.~Aleman and A.~Siskakis,  Integral operators on Bergman spaces, {\it Indiana Univ. Math. J.} {\bf 46} (1997), 337--356.

\bibitem{Ak} K. Attele, Interpolating sequences for the derivatives of Bloch functions, {\it Glasgow Math. J.} \textbf{34} (1992),  35--41.

\bibitem{ARS} N. Arcozzi, R. Rochberg, E. Sawyer, Carleson measures for analytic Besov spaces, {\it Rev. Mat. Iberoam.} \textbf{18} (2002),
443--510.

\bibitem{car} L. Carleson,  An interpolation problem for bounded analytic functions, {\it Amer. J. Math.} \textbf{80} (1958), 921--930.

 \bibitem{car1} L. Carleson, Interpolations by bounded analytic functions and the Corona problem, {\it Ann. Math.} \textbf{76} (1962), 54--559.

 \bibitem{cls} D.~Chang, S.~Li and S.~Stevi\' c,  On some integral operators on the unit polydisk and the unit ball, {\it Taiwanese J.
Math.} {\bf 11}  (2007), 1251--1286.

 \bibitem{dlz} J. Du, S. Li and Y. Zhang, Essential norm of generalized composition operators on Zygmund type spaces and Bloch type spaces,
 {\it  Ann. Polo. Math.} {\bf 119} (2017), 107--119.

\bibitem{DS} P. Duren and A. Schuster, {\it Bergman spaces}, Mathematical Surveys and Monographs, 100. American Mathematical Society, Providence, RI, 2004.

\bibitem{GP} D. Girela and J. Pel\'{a}ez,  Carleson measures for spaces of Dirichlet type, {\it Integr. Equ. Oper. Theory}  \textbf{55}
  (2006), 415--427.

\bibitem{GP1} D. Girela and J. Pel\'{a}ez,  Carleson measures, multipliers and integration operators for spaces of Dirichlet type,
{\it  J. Funct. Anal.}  \textbf{241}(2006),  334--358.

\bibitem{LLL}  P. Li, J. Liu and Z. Lou, Integral operators on analytic Morrey spaces, {\it Sci. China} {\bf 57} (2014), 1961--1974.


\bibitem{lsjia} S.~Li and S.~Stevi\' c,  Volterra type operators on Zygmund space, {\it J. Ineq. Appl.} Vol. {\bf 2007}, Article ID 32124, (2007), 10 pages.

\bibitem{ls3} S.~Li and S.~Stevi\' c,   Products of composition and integral type operators  from $H^\infty$ to the Bloch space, {\it Complex Var. Elliptic Equ.} {\bf 53} (2008), 463--474.


\bibitem{lsmn} S.~Li and S.~Stevi\' c, Riemann-Stieltjes operators between $\alpha$-Bloch spaces and Besov spaces, {\it Math. Nachr. } {\bf 282} (2009),  899--911.

\bibitem{LL}J. Liu,  Z. Lou,  Carleson measure for analytic Morrey spaces, {\it Nonlinear Anal.}  \textbf{125} (2015), 423--432.

 \bibitem{LLZ}J. Liu,  Z. Lou and K. Zhu, Embedding of M\"{o}bius invariant function spaces into tent spaces,  {\it J. Geom. Anal.}  \textbf{27} (2017),  1013--1028.

\bibitem{PZ} J. Pau and R. Zhao,  Carleson Measures, Riemann-Stieltjes and multiplication operators on a general family of function spaces,  {\it   Integr. Equ. Oper. Theory} {\bf 78} (2014), 483--514.

\bibitem{p} C.~Pommerenke, Schlichte funktionen und analytische funktionen von beschr\"ankter mittlerer oszillation, {\it Comment. Math. Helv.} {\bf 52} (1977), 591--602.

\bibitem{ql} R. Qian and S. Li, Volterra type operators on Morrey type spaces, {\it Math. Inequal. Appl.} {\bf 18} (2015),  1589--1599.

\bibitem{sl} Y. Shi and S. Li, Essential norm of integral operators on Morrey type spaces, {\it Math. Inequal. Appl.} {\bf 19} (2016), 385--393.

\bibitem{sz} A. Siskakis and R. Zhao, A Volterra type operator on spaces of analytic functions, {\it Contemp. Math.} {\bf 232} (1999), 299--311.


  \bibitem{SD} D. Stegenga,  Multipliers of the Dirichlet space, {\it Illinois J. Math.} \textbf{24} (1980),  113--139.

 \bibitem{TM}M. Tjani,  Distance of a Bloch function to the little Bloch space, {\it Bull. Austral. Math. Soc.} \textbf{74} (2006),  101--119.


  \bibitem{WJ} J. Wang,  The Carleson measure problem between analytic Morrey spaces, {\it Canad. Math. Bull.}  \textbf{59} (2016),   878--890.

 \bibitem{WZ} Z. Wu, Carleson measures and multipliers for Dirichlet spaces, {\it J. Funct. Anal.}  \textbf{169} (1999),  148--163.

 \bibitem{xiao}  J. Xiao, The $\mathcal{Q}_{p}$  Carleson measure problem, {\it Adv. Math.} \textbf{217} (2008), 2075--2088.



\bibitem{zhao}R. Zhao, {\it On a general family of function spaces}, Ann. Acad. Sci. Fenn. Math. Diss.  No. 105  (1996), 56p.


\bibitem{zhu} K. Zhu, {\it Operator Theory in Function Spaces}, Second edition. Mathematical Surveys and Monographs, 138. Amer. Math. Soc, Providence (2007).


\end{thebibliography}
\end{document}